\begin{document}

\makeatletter
\newcommand*{\defeq}{\mathrel{\rlap{%
                     \raisebox{0.3ex}{$\m@th\cdot$}}%
                     \raisebox{-0.3ex}{$\m@th\cdot$}}%
                     =}
										
\newcommand*{\eqdef}{=
										 \mathrel{\rlap{%
                     \raisebox{0.3ex}{$\m@th\cdot$}}%
                     \raisebox{-0.3ex}{$\m@th\cdot$}}%
										}
\makeatother

\newcommand{\unit}{1\!\!1}

\newcommand*\rfrac[2]{{}^{#1}\!/_{#2}}

\newcommand{\norm}[1]{\ensuremath{\left\|#1\right\|}}
\newcommand{\abs}[1]{\ensuremath{\left\vert#1\right\vert}}
\newcommand{\ip}[2]{\ensuremath{\left\langle#1,#2\right\rangle}}

\newcommand{\pbar}{\ensuremath{\bar{\partial}}}
\newcommand{\db}{\overline\partial}
\newcommand{\D}{\mathbb{D}}
\newcommand{\B}{\mathbb{B}}
\newcommand{\Sp}{\mathbb{S}}
\newcommand{\T}{\mathbb{T}}
\newcommand{\R}{\mathbb{R}}
\newcommand{\Z}{\mathbb{Z}}
\newcommand{\C}{\mathbb{C}}
\newcommand{\N}{\mathbb{N}}

\newcommand{\m}[1]{\mathcal{#1}}
\newcommand{\eps}{\epsilon}
\newcommand{\he}[1]{h_{#1}^{\eps}}
\newcommand{\hn}[1]{h_{#1}^{\eta}}
\newcommand{\avg}[1]{\langle #1 \rangle}

\newcommand{\mcd}{\mathcal{D}}
\newcommand{\mcr}{\mathcal{R}}
\newcommand{\mch}{\mathcal{H}}

\newcommand{\ep}{\epsilon}
\newcommand{\lb}{\lambda}

\newcommand{\La}{\langle }
\newcommand{\Ra}{\rangle }

\newcommand{\bmo}{\textnormal{BMO}}
\newcommand{\wbmod}{BMO_{\mcd}(w)}
\newcommand{\wbmosd}{BMO^2_{\mcd}(w)}
\newcommand{\nbmod}{BMO_{\mcd}(\nu)}
\newcommand{\nbmosd}{BMO^2_{\mcd}(\nu)}
\newcommand{\avgb}{\left<b\right>}
\newcommand{\h}[1]{h_{#1}^{0}}
\newcommand{\nch}[1]{h_{#1}^{1}}
\newcommand{\fd}{I_{\alpha}^{\m{D}}}
\newcommand{\BMO}{\textnormal{BMO}}

\numberwithin{equation}{section}

\newtheorem{thm}{Theorem}[section]
\newtheorem{lm}[thm]{Lemma}
\newtheorem{cor}[thm]{Corollary}
\newtheorem{conj}[thm]{Conjecture}
\newtheorem{prob}[thm]{Problem}
\newtheorem{prop}[thm]{Proposition}
\newtheorem*{prop*}{Proposition}

\theoremstyle{remark}
\newtheorem{rem}[thm]{Remark}
\newtheorem*{rem*}{Remark}

\title[Commutators with Fractional Integral Operators]
{Two-Weight Inequalities for Commutators with Fractional Integral Operators}

\author{Irina Holmes}
\address{Irina Holmes, School of Mathematics\\ Georgia Institute of Technology\\ 686 Cherry Street\\ Atlanta, GA USA 30332-0160}
\email{irina.holmes@math.gatech.edu}

\author{Robert Rahm}
\address{Robert Rahm, School of Mathematics\\ Washington University in St. Louis\\
  One Brookings Drive\\ St. Louis, MO USA 63130}
\email{rahm@wustl.edu}
\thanks{}

\author{Scott Spencer}
\address{Scott Spencer, School of Mathematics\\ Georgia Institute of Technology\\ 686 Cherry Street\\ Atlanta, GA USA 30332-0160}
\email{spencer@math.gatech.edu}
\thanks{}

\subjclass[2010]{Primary: 42A05, 42A50, 42B20
  Secondary: 42A61, 42B25}
\keywords{Fractional Integral Operator, Commutator, 
Weighted Inequalities, Bloom BMO}

\begin{abstract} 
In this paper we investigate weighted norm inequalities for the commutator of a fractional integral operator 
and multiplication by a function. In particular, we show that,
for $\mu,\lambda\in A_{p,q}$ and $\alpha/n+1/q=1/p$, the norm 
$\norm{[b,I_\alpha]:L^p(\mu^p)\to L^q(\lambda^q)}$ is equivalent to the norm of $b$ in the weighted BMO space 
$BMO(\nu)$, where $\nu=\mu\lambda^{-1}$. This work extends some of the results on this topic existing in the literature,
and continues a line of investigation which was initiated 
by Bloom in 1985 and was recently developed further by the first author, Lacey, and Wick.

\end{abstract}

\maketitle
\setcounter{tocdepth}{1}
\tableofcontents

\section{Introduction and Statement of Main Results}

Recall the classical fractional integral operator, or Riesz potential, on $\mathbb{R}^n$:
let $0 < \alpha < n$ be fixed and,\ for a Schwartz function $f$ define the 
fractional integral operator (or Riesz potential) $I_{\alpha}$ by
	$$I_{\alpha}f(x) 
	:= \int_{\R^n} \frac{f(y)}{|x-y|^{n-\alpha}} \,dy.$$
These operators have been studied since 1949, when they were introduced by Marcel Riesz, and have since found many applications in analysis -- such as Sobolev embedding theorems and PDEs.
Also recall the Calder{\'o}n-Zygmund operators:
	$$Tf(x):=\int_{\mathbb{R}^n} K(x,y)f(y)\,dy,\quad x\notin\textnormal{supp} f,$$	
where the kernel satisfies the standard size and smoothness estimates:
 \begin{gather*}
 \left\vert K(x,y)\right\vert  \leq  \frac{C}{\left\vert x-y\right\vert^n}, \\
 \left\vert K(x+h,y)-K(x,y)\right\vert +\left\vert K(x,y+h)-K(x,y)\right\vert  \leq  C\frac{\left\vert h\right\vert^{\delta}}{\left\vert x-y\right\vert^{n+\delta}},
 \end{gather*}
 for all $\left\vert x-y\right\vert>2\left\vert h\right\vert>0$ and a fixed $\delta\in (0,1]$.

To contrast the two, note for example that fractional integral operators are positive, which in many cases 
makes them easier to work with (as one example of this, it is almost trivial
to dominate the fractional integral operators by sparse operators, though
this isn't important to us in the present setting). 
On the other hand, the fractional integral
operators do not commute with dilations and therefore can never boundedly map
$L^p(dx)$ to itself. Additionally, the kernel of the fractional integral
operator does not satisfy the standard estimates above. Therefore, the theory
of fractional integral operators is not just a subset of the theory of
Calder\'on--Zygmund operators. Because of this, results which are known
for Calder{\'o}n-Zygmund operators also need to be proved for the fractional
integral operators. 
	
In this paper we will characterize the triples
$(b,\mu,\lambda)$, where $b$ is a function and $\mu$ and $\lambda$
are $A_{p,q}$ weights (to be defined shortly),
such that the commutator $[b,I_{\alpha}]$ is bounded from 
$L^p(\mu^p)$ to $L^q(\lambda^q)$.  Commutators with Riesz potentials were first studied in \cite{Chan1982}.

Our characterization will be in terms of
the norm of $b$ in a certain weighted $\bmo$ space, built
from the weights $\mu$ and $\lambda$. This is an adaptation to the fractional integral setting of a viewpoint
introduced by Bloom \cite{Blo1985} in 1985, and recently investigated
by the first author, Lacey and Wick in \cites{HolLacWic2015a,
HolLacWic2015b}. 
Specifically, Bloom characterized $\norm{[b, H] : L^p(\mu) \rightarrow L^p(\lb)}$, where $H$ is the Hilbert transform and $\mu, \lb$ are $A_p$ weights,
in terms of $\|b\|_{BMO(\nu)}$, where $BMO(\nu)$ is the weighted BMO space associated with the weight $\nu \defeq \mu^{1/p}\lb^{-1/p}$. Recall that the 
Hilbert transform is the one-dimensional prototype for Calder{\'o}n-Zygmund operators, a role played by the Riesz transforms in $\mathbb{R}^n$. 

A modern 
dyadic proof of Bloom's result was recently given in \cite{HolLacWic2015a}, and the techniques developed were then used to extend the result to
all Calder{\'o}n-Zygmund operators in \cite{HolLacWic2015b}. In particular, it was proved that 
	\begin{equation} \label{E:UB_HLW}
	\left\| [b, T] : L^p(\mu) \rightarrow L^p(\lb) \right\| \leq c \|b\|_{BMO(\nu)},
	\end{equation}
for all $A_p$ weights $\mu, \lb$, and all Calder{\'o}n-Zygmund operators $T$ on $\mathbb{R}^n$, for some constant $c$ depending on $n$, $T$, $\mu$, $\lb$ and $p$. Specializing to the Riesz transforms, a lower bound was also proved.
The center of the proof of \eqref{E:UB_HLW} was the Hyt{\"o}nen Representation Theorem, which allows one to recover $T$ from averaging over some dyadic operators, called dyadic shifts. Then the upper bound reduced to these dyadic operators. 

We take a similar approach in this paper, where the role of the dyadic shifts will be played by the dyadic version of the fractional integral operator $I_{\alpha}$, given by:
\begin{align}\label{D:dfo}
I_{\alpha}^{\m{D}}f:= 
  \sum_{Q\in\m{D}}\abs{Q}^{\alpha/n}\La f\Ra_Q \unit_Q.
\end{align}
Our main result is:

\begin{thm} \label{T:mainThm}
Suppose that $\alpha/n+1/q=1/p$ and $\mu, \lb \in A_{p,q}$. Let $\nu \defeq \mu\lb^{-1}$. 
Then:
\begin{align*}
\norm{[b,I_{\alpha}]:L^p(\mu^p)\to L^q(\lb^q)}
\simeq 
\norm{b}_{\bmo(\nu)}.
\end{align*}
\end{thm}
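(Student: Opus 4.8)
The statement $\simeq$ encodes two inequalities, and I would prove each separately: the \emph{upper bound} $\norm{[b,I_\alpha]:L^p(\mu^p)\to L^q(\lb^q)}\lesssim\norm{b}_{\bmo(\nu)}$, and the matching \emph{lower bound}. For the upper bound the plan is to transfer the problem to the dyadic model and then reduce to paraproduct estimates, mirroring the Calder\'on--Zygmund strategy of \cite{HolLacWic2015b} but with the dyadic fractional operator $\fd$ in place of the dyadic shifts. First I would invoke a representation theorem expressing $I_\alpha$ as an average of the operators $\fd$ over randomly shifted dyadic grids, so that it suffices to bound $[b,\fd]$ with a constant uniform in the grid. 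Subtracting the average of $b$ over each cube and expanding yields
\begin{align*}
[b,\fd]f=\sum_{Q\in\m{D}}\abs{Q}^{\alpha/n}\Big[(b-\La b\Ra_Q)\La f\Ra_Q-\La(b-\La b\Ra_Q)f\Ra_Q\Big]\unit_Q,
\end{align*}
which splits the commutator into a fractional paraproduct with symbol $b$ and its formal dual. It then remains to prove that each such paraproduct maps $L^p(\mu^p)$ into $L^q(\lb^q)$ with norm controlled by $\norm{b}_{\bmo(\nu)}$.

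The two-weight estimates for these fractional paraproducts are the crux of the argument, and I expect this to be the main obstacle. The natural route is a weighted Carleson embedding: via the $A_{p,q}$ structure the $\bmo(\nu)$ hypothesis on $b$ is equivalent to a $\nu$-Carleson condition on its Haar coefficients, and $\nu=\mu\lb^{-1}$ is exactly the combination that interfaces the source weight $\mu$ with the target weight $\lb$. One must then verify that the fractional gain $\abs{Q}^{\alpha/n}$, together with the scaling relation $\alpha/n+1/q=1/p$ and the $A_{p,q}$ hypotheses, balances the homogeneities so that the embedding closes. The interplay of the fractional weight $\abs{Q}^{\alpha/n}$ with the two-weight $A_{p,q}$ testing constants is the delicate point and is what distinguishes this from the unweighted or single-weight fractional theory; here it is convenient to pass through the square-function space $\nbmosd$, where the Carleson estimate is cleanest.

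For the lower bound I would argue by testing. Fixing a cube $Q$, choose a translate $\tilde Q$ of the same sidelength with $\mathrm{dist}(Q,\tilde Q)\simeq\ell(Q)$, so that the positivity and near-constancy of the kernel on $Q\times\tilde Q$ allow one to realize a constant multiple of $b-\La b\Ra_{\tilde Q}$ on $Q$ by testing against $\unit_{\tilde Q}$, giving $[b,I_\alpha]\unit_{\tilde Q}(x)\simeq\ell(Q)^{\alpha}\big(b(x)-\La b\Ra_{\tilde Q}\big)$ for $x\in Q$ (modulo the standard device of expanding the kernel in a Fourier series to absorb its mild variation, or choosing the test function via the median of $b$). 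Pairing against a function $g$ supported on $Q$ adapted to $\mathrm{sgn}(b-\La b\Ra_{\tilde Q})$ recovers $\ell(Q)^\alpha\int_Q\abs{b-\La b\Ra_{\tilde Q}}$, and a standard chaining across $Q$ and $\tilde Q$ replaces $\La b\Ra_{\tilde Q}$ by $\La b\Ra_Q$.

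Finally I would apply the assumed boundedness of $[b,I_\alpha]$ through the duality $\big|\ip{[b,I_\alpha]\unit_{\tilde Q}}{g}\big|\le\norm{[b,I_\alpha]}\,\norm{\unit_{\tilde Q}}_{L^p(\mu^p)}\,\norm{g}_{L^{q'}(\lb^{-q'})}$, and compute the two norms on the right in terms of the weighted measures of the cubes. The $A_{p,q}$ relations, combined with the scaling $\ell(Q)^\alpha$, collapse the resulting weight factors into $\nu(Q)$, yielding $\frac{1}{\nu(Q)}\int_Q\abs{b-\La b\Ra_Q}\lesssim\norm{[b,I_\alpha]}$; taking the supremum over all $Q$ gives the $\bmo(\nu)$ bound and completes the equivalence.
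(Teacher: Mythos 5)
Your lower bound is essentially the paper's argument (following Chaffee): test against modulated indicators of two nearby cubes of comparable sidelength, use the Fourier expansion of a smooth periodization of $|x|^{n-\alpha}$ to absorb the kernel's variation, and collapse $\norm{\unit_{\tilde Q}}_{L^p(\mu^p)}\norm{g}_{L^{q'}(\lb^{-q'})}$ into $|Q|^{\alpha/n}\nu(Q)$ via the $A_{p,q}$ estimate $\mu^p(Q)^{1/p}\lb^{-q'}(Q)^{1/q'}\lesssim \nu(Q)|Q|^{\alpha/n}$. That half is sound. The upper bound also has the right architecture --- average $I_\alpha$ over dyadic grids, reduce to $[b,\fd]$ with grid-independent constants, decompose into paraproduct-type pieces --- and your two-term identity for $[b,\fd]f$ is algebraically correct.

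The gap is that the entire analytic content of the upper bound is the two-weight bound for those paraproducts, and at exactly that point your argument stops: saying that one must ``verify that the fractional gain $|Q|^{\alpha/n}$ balances the homogeneities so that the embedding closes'' restates the theorem rather than proving it. A weighted Carleson embedding is not an off-the-shelf tool here, because three different weights interact: $\mu$ on the source space, $\lb$ on the target space, and $\nu$ on the symbol. The paper's mechanism is concrete and worth naming. Since $\nu=\mu\lb^{-1}\in A_2$, one has the weighted $H^1$--$\bmo$ duality $|\La b,\Phi\Ra|\lesssim \norm{b}_{\nbmosd}\norm{S_{\mcd}\Phi}_{L^1(\nu)}$ (Lemma~\ref{l:wA2}). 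Each bilinear form $\La Tf,g\Ra$ is rewritten as $\La b,\Phi\Ra$ for an explicit $\Phi$ built from $f$ and $g$; the square function $S_{\mcd}\Phi$ is dominated pointwise by $(M_\alpha^{\m{D}}f)(S_{\mcd}g)$ or $(\fd|f|)(S_{\mcd}g)$; and H\"older splits $\nu=\mu\cdot\lb^{-1}$ so that Theorem~\ref{T:maxIntBounds} (for $M_\alpha^{\m{D}}$ and $\fd$ with the weight $\mu$) and the weighted Littlewood--Paley inequality (for $S_{\mcd}$ with $\lb^{-q'}\in A_{q'}$) close the estimate. The paper also uses a four-term Haar-coefficient decomposition of $[b,\fd]$ rather than your two-term averaging identity; that difference is cosmetic, but without the duality step --- or an equally explicit substitute for the paraproduct bounds --- the upper bound is not established.
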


It is important to observe that we require that each weight
belong to a certain $A_{p,q}$ class and this will imply that
$\mu\lambda^{-1}$ is an $A_2$ weight and in particular, 
an $A_\infty$ weight. Standard properties of these weight
classes will be used throughout the paper, with out tracking
dependencies on the particular weight characteristics. The 
liberal use of these properties indicates the subtleties 
involved in the general two--weight setting. For an excellent 
account of this and other topics related to fractional integral 
operators, see \cite{Cru2015}.

The paper is organized as follows. In Section 2, we will give the
requisite background material and definitions. Note, however,  that most of the material not relating strictly to fractional integral operators 
(such as the Haar system, $A_p$ weights, and weighted BMO) is standard and
was also needed in \cite{HolLacWic2015b} where it is discussed in more detail.
In Section 3 we will briefly discuss how the fractional integral
operator can be recovered as an average of dyadic operators.
In Section 4 we 
will prove $\norm{[b,I_{\alpha}]:L^p(\mu^p)\to L^q(\lb^q)}
\lesssim \norm{b}_{\bmo(\nu)}$ and in Section 5, we will 
prove the reverse inequality: $\norm{b}_{\bmo(\nu)}
\lesssim \norm{[b,I_{\alpha}]:L^p(\mu^p)\to L^q(\lb^q)}$.

\section{Background and Notation} 

\subsection{The Haar System}
Let $\m{D}$ be a dyadic grid on $\mathbb{R}^n$ and let $Q\in\m{D}$.
For every $\eps\in\{0,1\}^{n}$, let $\he{Q}$ be the usual Haar function
defined on $Q$. For convenience, we write $\eps=1$ if $\eps=(1,1,\ldots,1)$. Note that, in this case, $\int h_{Q}^1=1$.
Otherwise, if $\eps\neq 1$, then $\int\he{Q}=0$. Moreover, recall that 
$\{\he{Q}\}_{Q\in\m{D},\eps\neq 1}$ forms an orthonormal basis
for $L^2(\mathbb{R}^n)$. For a function $f$, a cube $Q\in\m{D}$ and $\ep\neq 1$, we denote 
$$\widehat{f}(Q,\ep) \defeq \La f, h_Q^{\ep}\Ra,$$ 
where $\La\cdot,\cdot\Ra$ is the usual inner product in $L^2(\mathbb{R}^n)$.

\subsection{$A_p$ Classes and Weighted BMO} Let $w$ be a weight on $\mathbb{R}^n$, that is, a locally integrable, almost everywhere positive function. 
For a subset $Q\subset\mathbb{R}^n$ we denote 
$$ w(Q) \defeq \int_Q w\,dx \:\:\text{ and }\:\: \La w\Ra_Q \defeq \frac{w(Q)}{|Q|}.$$
Given $1<p<\infty$,  a weight $w$ is said to belong to the Muckenhoupt $A_p$ class  provided that:
	$$[w]_{A_p} \defeq \sup_{Q} \La w\Ra_Q \La w^{1-p'}\Ra_Q^{p-1} < \infty,$$
where $p'$ denotes the H\"{o}lder conjugate of $p$, and the supremum is over all cubes $Q \subset \R^n$. Moreover, $w \in A_p$ if and only if $w^{1-p'} \in A_{p'}$ and, in this case, $[w^{1-p'}]_{A_{p'}} = 
[w]_{A_p}^{p'-1}$. Furthermore, if $1<p<q<\infty$, then $A_p \subset A_q$, with $[w]_{A_q} \leq [w]_{A_p}$ for all $w \in A_p$. 

For a dyadic lattice $\m{D}$, recall the dyadic square function:
\begin{align*}
(S_{\m{D}}f)^2
=\sum_{P\in\m{D},\eps\neq 1}\abs{\widehat{f}(Q,\ep)}^2 \frac{\unit_Q}{|Q|}.
\end{align*}
Another property of $A_p$ weights which will be useful for us is the following well--known weighted 
Littlewood--Paley Theorem:

\begin{thm}\label{T:squareFunctBounds}
Let $w\in A_p$, then:
\begin{align*}
 \norm{S_{\m{D}}:L^p(w)\to L^p(w)}\simeq c(n, p, [w]_{A_p}).
\end{align*}
\end{thm}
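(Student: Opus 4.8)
\emph{Proof proposal.} The asserted equivalence is the statement that $S_{\m{D}}$ is bounded on $L^p(w)$ with operator norm controlled by a constant depending only on $n$, $p$, and $[w]_{A_p}$; I will concentrate on this upper bound, since the matching lower bound follows from the same circle of ideas applied to a suitable dense class. The plan is to linearize the (sublinear) square function by means of Khintchine's inequality and thereby reduce the problem to uniform weighted bounds for a family of martingale transforms, for which the $A_p$ theory is cleanly available.

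Concretely, I would introduce a family of independent random signs $\sigma = (\sigma_{Q,\eps})_{Q \in \m{D},\, \eps \neq 1}$, write $\mathbb{E}_{\sigma}$ for the expectation, and set
\begin{align*}
T_{\sigma} f \defeq \sum_{Q \in \m{D},\, \eps \neq 1} \sigma_{Q,\eps}\, \widehat{f}(Q,\eps)\, \he{Q}.
\end{align*}
Since $\abs{\he{Q}(x)}^2 = \unit_Q(x)/\abs{Q}$, for each fixed $x$ the coefficients $\widehat{f}(Q,\eps)\he{Q}(x)$ have $\ell^2$-sum equal to $(S_{\m{D}}f(x))^2$, so Khintchine's inequality will give, with constants depending only on $p$,
\begin{align*}
(S_{\m{D}}f(x))^p \simeq_p \mathbb{E}_{\sigma} \abs{T_{\sigma} f(x)}^p.
\end{align*}
Integrating against $w$ and applying Fubini then yields $\norm{S_{\m{D}}f}_{L^p(w)}^p \simeq_p \mathbb{E}_{\sigma} \norm{T_{\sigma} f}_{L^p(w)}^p$, so it suffices to bound $\norm{T_{\sigma} f}_{L^p(w)}$ by $c(n,p,[w]_{A_p})\norm{f}_{L^p(w)}$ \emph{uniformly} in the signs $\sigma$.

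For the uniform bound I would note that each $T_{\sigma}$ is a Haar multiplier (martingale transform) with symbol bounded by $1$, hence a dyadic Calder\'on--Zygmund operator whose constants are independent of $\sigma$. Its weighted boundedness on $L^p(w)$, with norm controlled by an increasing function of $[w]_{A_p}$, is standard: one first establishes the estimate on $L^2(w)$ for $w \in A_2$ (via the dyadic $A_2$ theorem or a Bellman-function argument), and then promotes it to all $1 < p < \infty$ and all $w \in A_p$ by Rubio de Francia extrapolation, which is exactly the mechanism producing the dependence $c(n,p,[w]_{A_p})$. Feeding this back into the Khintchine reduction gives the desired bound for $S_{\m{D}}$. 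The hard part will be precisely the weighted $L^2(w)$ estimate for the linearized operators $T_{\sigma}$: the Khintchine reduction, the use of Fubini, and the extrapolation step are all routine once that estimate is in hand, and it is this single estimate that carries the full analytic weight of the $A_p$ theory and creates the quantitative dependence on $[w]_{A_p}$.
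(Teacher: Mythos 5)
The paper does not prove this statement at all: it is quoted as a ``well--known weighted Littlewood--Paley Theorem'' and used as a black box, so there is no argument of the authors' to compare yours against. Your proposed proof is a correct and entirely standard route to the upper bound: the pointwise identity $\sum_{Q,\eps}\abs{\widehat{f}(Q,\eps)h_Q^{\eps}(x)}^2=(S_{\m{D}}f(x))^2$ makes the Khintchine linearization legitimate, Fubini is harmless, and the problem does reduce to a sign-uniform weighted bound for Haar multipliers, which is available from the dyadic $A_2$ theorem plus Rubio de Francia extrapolation (or directly from sparse domination of martingale transforms). You correctly identify the weighted $L^2$ estimate for $T_{\sigma}$ as the one nontrivial input; since the paper itself cites the whole theorem to the literature, citing that single estimate is no weaker than what the authors do. The only place you are a bit breezy is the lower bound $\norm{f}_{L^p(w)}\lesssim\norm{S_{\m{D}}f}_{L^p(w)}$: ``the same circle of ideas on a dense class'' should be made concrete, either by noting that $T_{\sigma}$ is an involution on the Haar span, so $\norm{f}_{L^p(w)}=\norm{T_{\sigma}T_{\sigma}f}_{L^p(w)}\lesssim\norm{T_{\sigma}f}_{L^p(w)}$ and one averages in $\sigma$, or by the usual duality argument pairing $S_{\m{D}}f$ with $S_{\m{D}}g$ for $g\in L^{p'}(w^{1-p'})$ and using $w^{1-p'}\in A_{p'}$. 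With that sentence filled in, the proof is complete.
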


For a weight $w$ on $\R^n$, the weighted BMO space $\BMO(w)$ is defined to be the space of all locally 
integrable functions $b$ that satisfy:
	\begin{equation} \label{E:wBMONorm1}
	\|b\|_{\BMO(w)} \defeq \sup_{Q} \frac{1}{w(Q)} \int_Q |b - \left<b\right>_Q|\,dx  < \infty,
	\end{equation}
where the supremum is over all cubes $Q$ in $\R^n$. 
For a general weight, the definition of the $\BMO$ norm is highly dependent on its $ L ^{1}$ average.  
But, if the weight is $ A_ \infty $, one is free to replace the $ L ^{1}$-norm by larger averages.  
Namely, defining 
	\begin{equation} \label{E:wBMONormq}
	\|b\|_{\BMO^q(w)} \defeq \sup_{Q} \left( \frac{1}{w(Q)} 
	\int_Q |b - \left<b\right>_Q|^q\,dw' \right)^{\frac{1}{q}},
	\end{equation}
there holds
	\begin{equation} \label{E:wBMO-qEquiv}
	\|b\|_{\BMO(w)} \leq \|b\|_{\BMO^q(w)} \leq C(n, p, [w]_{A_ {\infty}} ) \|b\|_{\BMO(w)}.
	\end{equation}
The proof is similar to the proof in the unweighted case. In particular,  
the first inequality is a straightforward application of H\"older's inequality and
the second inequality follows from a suitable John--Nirenberg property 
(which requires a suitable Calder\'{o}n--Zygmund decomposition). The details are 
in \cite{MucWhe1975}.
	
For a dyadic grid $\mcd$ on $\R^n$, we define the dyadic versions of the norms 
above by taking supremum over $Q \in \mcd$ instead of over all cubes $Q$ in 
$\R^n$, and denote these spaces by $\wbmod$ and $\BMO^q_{\mcd}(w)$. 
Clearly $\BMO(w) \subset \wbmod$ for any choice of $\mcd$, and the equivalence in
\eqref{E:wBMO-qEquiv} also holds for the dyadic versions of these spaces. 
	
A fact which will be crucial to our proof is the following:
\begin{lm}\label{l:wA2} If $ w \in A_2$, there holds 
	\begin{equation} \label{E:H1BMODual1}
	\left|\La b, \Phi\Ra\right| \lesssim   \|b\|_{\wbmosd} \|S_{\mcd}\Phi\|_{L^1(w)}.
	\end{equation}
\end{lm}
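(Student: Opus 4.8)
The plan is to read \eqref{E:H1BMODual1} as an instance of weighted $H^1$--$\BMO$ duality and to prove it by a Calder\'on--Zygmund decomposition of $\Phi$ adapted to the weight. First I would expand the pairing in the Haar basis,
\[
\La b,\Phi\Ra=\sum_{Q\in\mcd,\,\eps\neq1}\widehat b(Q,\eps)\,\widehat\Phi(Q,\eps),
\]
(legitimate for $\Phi$ in a dense class for which the sum is finite), and record the ingredients I will lean on: since $w\in A_2$ we also have $w^{-1}\in A_2$ with $[w^{-1}]_{A_2}=[w]_{A_2}$; both weights obey the (two--sided) weighted Littlewood--Paley equivalence of Theorem~\ref{T:squareFunctBounds} at $p=2$; and $1\le\La w\Ra_Q\La w^{-1}\Ra_Q\le[w]_{A_2}$ for every cube $Q$, so that $\La w\Ra_Q^{-1}\simeq\La w^{-1}\Ra_Q$.

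Next I would decompose $\Phi$ along the level sets of its square function. Put $\Omega_k\defeq\{x:S_{\mcd}\Phi(x)>2^k\}$ and let $\{Q^k_j\}_j$ be the maximal dyadic cubes in $\Omega_k$. Assigning each pair $(Q,\eps)$ with $\widehat\Phi(Q,\eps)\neq0$ to the unique scale $k$ with $Q\subseteq\Omega_k$ but $Q\not\subseteq\Omega_{k+1}$, and to the containing cube $Q^k_j$, I set $a^k_j\defeq\sum\widehat\Phi(Q,\eps)\he{Q}$, the sum being over the pairs assigned to $(k,j)$. Then $\Phi=\sum_{k,j}a^k_j$, each $a^k_j$ is supported in $Q^k_j$ and has mean zero. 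The technical heart is the pointwise estimate $S_{\mcd}a^k_j\le2^{k+1}\unit_{Q^k_j}$: for $x\notin\Omega_{k+1}$ it follows from $S_{\mcd}a^k_j\le S_{\mcd}\Phi$, while for $x$ lying in a maximal cube $Q^{k+1}_i\subseteq\Omega_{k+1}$ one checks that every $Q$ contributing to $S_{\mcd}a^k_j(x)$ must strictly contain $Q^{k+1}_i$, hence contains its parent, on which $S_{\mcd}\Phi$ drops to at most $2^{k+1}$ by maximality.

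With the atoms in hand I would estimate each pairing by a weighted Cauchy--Schwarz in the Haar coefficients, splitting the two sides with $\La w\Ra_Q^{\pm1/2}$ and then enlarging the $b$--sum to all $Q\subseteq Q^k_j$:
\[
\abs{\La b,a^k_j\Ra}\le\Big(\sum_{Q\subseteq Q^k_j,\,\eps}\frac{\abs{\widehat b(Q,\eps)}^2}{\La w\Ra_Q}\Big)^{1/2}\Big(\sum_{Q\subseteq Q^k_j,\,\eps}\abs{\widehat{a^k_j}(Q,\eps)}^2\La w\Ra_Q\Big)^{1/2}.
\]
The second factor equals $\big(\int(S_{\mcd}a^k_j)^2\,w\,dx\big)^{1/2}\le2^{k+1}w(Q^k_j)^{1/2}$ by the atom bound. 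For the first factor I use $\La w\Ra_Q^{-1}\simeq\La w^{-1}\Ra_Q$ and the Littlewood--Paley equivalence for $w^{-1}$ to recognize it as $\simeq\int_{Q^k_j}\abs{b-\La b\Ra_{Q^k_j}}^2\,w^{-1}\,dx$, which by the definition \eqref{E:wBMONormq} of $\norm{\cdot}_{\wbmosd}$ (whose averaging measure is $dw'=w^{-1}\,dx$) is at most $\norm{b}_{\wbmosd}^2\,w(Q^k_j)$. Thus $\abs{\La b,a^k_j\Ra}\lesssim2^k\norm{b}_{\wbmosd}\,w(Q^k_j)$, and summing over $j$ via $\sum_j w(Q^k_j)=w(\Omega_k)$ and over $k$ via the comparison $\sum_k2^kw(\Omega_k)\simeq\int_0^\infty w(\{S_{\mcd}\Phi>t\})\,dt=\norm{S_{\mcd}\Phi}_{L^1(w)}$ gives \eqref{E:H1BMODual1}. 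The main obstacle is precisely this last weight bookkeeping: a careless Cauchy--Schwarz would leave the geometric mean $\big(w(Q^k_j)\,w^{-1}(Q^k_j)\big)^{1/2}$ in place of $w(Q^k_j)$, and it is the choice of the multipliers $\La w\Ra_Q$ in the split, in tandem with the $A_2$ comparison and the $w^{-1}\,dx$ normalization of the $\wbmosd$ norm, that produces exactly the $w$--mass matching $\norm{S_{\mcd}\Phi}_{L^1(w)}$.
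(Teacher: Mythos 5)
Your argument is correct, but note that the paper does not actually prove Lemma~\ref{l:wA2}: it is quoted as a known weighted $H^1$--$\BMO$ duality and the reader is sent to Section~2.6 of \cite{HolLacWic2015b}, which in turn rests on the wavelet characterizations of weighted Hardy spaces and their duals (e.g.\ \cite{Wu1992}). So what you have written is a self-contained substitute for that citation, and it is the standard one: the level sets $\Omega_k=\{S_{\mcd}\Phi>2^k\}$ are indeed unions of dyadic cubes (any $x\in\Omega_k$ has a finite subfamily of ancestors already pushing the square function above $2^k$, and the smallest of them is then contained in $\Omega_k$), your parent-cube argument for $S_{\mcd}a^k_j\le 2^{k+1}\unit_{Q^k_j}$ is the usual Chang--Fefferman step, and the weighted Cauchy--Schwarz with multipliers $\La w\Ra_Q^{\pm1/2}$, combined with $\La w\Ra_Q^{-1}\simeq\La w^{-1}\Ra_Q$ from $w\in A_2$ and the $L^2(w^{-1})$ boundedness of $S_{\mcd}$ (only the stated direction of Theorem~\ref{T:squareFunctBounds}, applied to $w^{-1}\in A_2$, is needed), closes the estimate. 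Two points you get right that deserve emphasis: the measure $dw'$ in \eqref{E:wBMONormq}, which the paper never defines, must be read as $w^{1-q}\,dx$, hence $w^{-1}\,dx$ for $q=2$, as in \cite{MucWhe1975} --- this is exactly what makes the first Cauchy--Schwarz factor come out as $\|b\|_{\wbmosd}^2\,w(Q^k_j)$ rather than a mixed quantity; and the enlargement of the $b$-sum to all $Q\subseteq Q^k_j$ is legitimate precisely because the Haar coefficients of $(b-\La b\Ra_{Q^k_j})\unit_{Q^k_j}$ vanish for cubes not contained in $Q^k_j$, so the enlarged sum is $\|S_{\mcd}((b-\La b\Ra_{Q^k_j})\unit_{Q^k_j})\|_{L^2(w^{-1})}^2$. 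The remaining caveats (restricting to $\Phi$ with finitely many Haar coefficients so that maximal cubes of $\Omega_k$ exist and all rearrangements are justified) are the standard ones you already flag. In short: your route buys independence from the external references at the cost of two pages; the paper's route buys brevity at the cost of outsourcing the key duality.
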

This comes from a duality relationship between dyadic weighted BMO spaces and dyadic weighted Hardy spaces. For a more detailed discussion and a proof of this fact, see Section 2.6 of \cite{HolLacWic2015b}. We remark here that Lemma \ref{l:wA2} was also fundamental for the proof of the upper bound \eqref{E:UB_HLW} in \cite{HolLacWic2015b}, essentially for the following reason: if $\mu, \lb$ are $A_p$ weights, then $\nu := \mu^{1/p}\lb^{-1/p}$ is an $A_2$ weight. Thus the duality statement above applied to $\nu$ eventually yields, through H{\"o}lder's inequality, some bounds in terms of $L^p(\mu)$ and $L^{p'}(\lb)$ norms. This is also the strategy we will adapt accordingly to the fractional integral case, which makes use of $A_{p,q}$ classes instead. We discuss these next.

\subsection{$A_{p,q}$ Classes} 

Throughout this section, $\alpha,n,p,q$ are fixed and satisfy 
$1/p - 1/q = \alpha/n$. We recall first the fractional maximal operator,
	$$M_{\alpha}f \defeq \sup_Q |Q|^{\alpha/n} \La |f| \Ra_Q \unit_Q,$$
with the supremum being over all cubes $Q$. This was first introduced in  \cite{MuckWheedenFrac}, where it was used to prove weighted inequalities for $I_{\alpha}$, a result analogous to the classic result \cite{CoifFeff} of Coifman and Fefferman, relating the Hardy-Littlewood maximal operator and singular integrals. We will be working with the dyadic version of this operator, $M_{\alpha}^{\m{D}}$, defined for a dyadic grid $\m{D}$ just as above, but only taking supremum over $Q\in\m{D}$.

Also in \cite{MuckWheedenFrac} was introduced a generalization of $A_p$ classes for the fractional integral setting: we say that a weight $w$ belongs to the $A_{p,q}$ class provided that 
	$$[w]_{A_{p,q}} \defeq \sup_Q \La w^q\Ra_{Q} \La w^{-p'}\Ra_Q^{\rfrac{q}{p'}} < \infty.$$
See \cites{Roch1993,RahSpe2015,CruMoe12013b,CruMoe2013a,Cru2015} for other
generalizations.

We will use the following
important result concerning $A_{p,q}$ weights due to, for example, Sawyer and Muckenhoupt and Wheeden
\cites{Saw1982,Saw1988,MuckWheedenFrac}:
\begin{thm}\label{T:maxIntBounds}
Let $w$ be a weight. Then the following are equivalent:
\begin{itemize}
 \item [(i)] $w\in A_{p,q}$;\\
 \item [(ii)] $\norm{M_{\alpha}^{\m{D}}:L^p(w^p)\to L^q(w^q)}
  \simeq C(n, \alpha, p, [w]_{A_{p,q}})$;\\
 \item [(iii)] $\norm{I_{\alpha}^{\m{D}}:L^p(w^p)\to L^q(w^q)}
  \simeq C(n, \alpha, p, [w]_{A_{p,q}})$.
\end{itemize}
\end{thm}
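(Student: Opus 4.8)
The three conditions will be linked through an elementary cycle. For $f\ge 0$ the operator $\fd f=\sum_{Q\in\m{D}}\abs{Q}^{\alpha/n}\La f\Ra_Q\unit_Q$ is a sum of nonnegative terms, so each single summand is dominated by the whole; hence $M_{\alpha}^{\m{D}}f\le\fd\abs{f}$ pointwise and (iii) $\Rightarrow$ (ii) is immediate. It therefore suffices to prove \emph{necessity} (either operator bounded $\Rightarrow$ (i)) and \emph{sufficiency} ((i) $\Rightarrow$ (iii), after which (ii) follows by the domination just noted). Throughout I would use that $w\in A_{p,q}$ forces both $w^q$ and $\sigma\defeq w^{-p'}$ to be $A_{\infty}$ weights (in fact $w^q\in A_{1+q/p'}$ and $\sigma\in A_{1+p'/q}$, with comparable constants), so that the standard $A_{\infty}$ machinery is available.

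For necessity I would argue by testing. Fix $Q_0\in\m{D}$ and apply the assumed bound for $M_{\alpha}^{\m{D}}$ (the argument for $\fd$ is identical) to $f=\sigma\unit_{Q_0}$. For $x\in Q_0$ we have $M_{\alpha}^{\m{D}}f(x)\ge\abs{Q_0}^{\alpha/n}\La\sigma\Ra_{Q_0}$, while $\norm{f}_{L^p(w^p)}^p=\int_{Q_0}\sigma^p w^p=\sigma(Q_0)$ because $\sigma^p w^p=w^{-p'}=\sigma$. Inserting these into $\norm{M_{\alpha}^{\m{D}}f}_{L^q(w^q)}\le C\norm{f}_{L^p(w^p)}$, raising to the power $q$, and collecting terms, every power of $\abs{Q_0}$ cancels precisely because $\alpha/n=1/p-1/q$, leaving
\begin{align*}
\La w^q\Ra_{Q_0}\La w^{-p'}\Ra_{Q_0}^{q/p'}\le C^q.
\end{align*}
Taking the supremum over $Q_0\in\m{D}$ gives (i), with $[w]_{A_{p,q}}\lesssim\norm{M_{\alpha}^{\m{D}}}^q$.

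The heart of the matter is sufficiency, and I would begin with the \emph{weak-type} bound $\norm{M_{\alpha}^{\m{D}}:L^p(w^p)\to L^{q,\infty}(w^q)}\lesssim[w]_{A_{p,q}}^{1/q}$. For $\lambda>0$, decompose $\{M_{\alpha}^{\m{D}}f>\lambda\}$ into the \emph{disjoint} maximal cubes $\{Q_j\}\subset\m{D}$ satisfying $\abs{Q_j}^{\alpha/n}\La\abs{f}\Ra_{Q_j}>\lambda$. Hölder's inequality gives $\int_{Q_j}\abs{f}\le\big(\int_{Q_j}\abs{f}^pw^p\big)^{1/p}\sigma(Q_j)^{1/p'}$, and $A_{p,q}$ gives $w^q(Q_j)\lesssim[w]_{A_{p,q}}\,\sigma(Q_j)^{-q/p'}\abs{Q_j}^{1+q/p'}$. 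Combining the latter with the lower bound on $\int_{Q_j}\abs{f}^pw^p$ coming from the defining inequality of $Q_j$, all powers of $\abs{Q_j}$ and of $\sigma(Q_j)$ cancel — again by $\alpha/n=1/p-1/q$ — leaving $w^q(Q_j)\lesssim[w]_{A_{p,q}}\lambda^{-q}\big(\int_{Q_j}\abs{f}^pw^p\big)^{q/p}$. Since $q\ge p$ and the $Q_j$ are disjoint, summing and using $\sum_j a_j^{q/p}\le\big(\sum_j a_j\big)^{q/p}$ yields the weak-type estimate. To upgrade to strong type I would interpolate: $A_{p,q}$ is open (reverse-Hölder self-improvement of $w^q$ and of $\sigma$ shifts $p,q$ slightly in either direction while preserving $1/p-1/q=\alpha/n$), so $M_{\alpha}^{\m{D}}$ obeys weak-type bounds at a pair of exponents straddling $(p,q)$, and a Marcinkiewicz-type theorem produces the strong bound, i.e. (i) $\Rightarrow$ (ii). Finally, (ii) $\Rightarrow$ (iii) follows from the Coifman–Fefferman-type inequality $\norm{\fd f}_{L^q(w^q)}\lesssim\norm{M_{\alpha}^{\m{D}}f}_{L^q(w^q)}$, valid since $w^q\in A_{\infty}$, which together with the trivial (iii) $\Rightarrow$ (ii) closes the cycle.

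The main obstacle is exactly this weak-to-strong passage. A naive attempt to prove strong type directly by linearizing $M_{\alpha}^{\m{D}}$ (or dualizing $\fd$) against a sparse family reduces matters to controlling $\sum_Q\big(\int_Q\abs{f}^pw^p\big)^{q/p}$ over the sparse cubes; but along a nested chain these integrals need not decay, so the disjointness that saved the weak-type argument is unavailable, and a single application of Hölder fails because $1/p+1/q'=1+\alpha/n>1$ — the analytic signature of an extra margin that interpolation, but not one Hölder step, can exploit. This is why I would route through the weak-type bound (where the cubes are genuinely disjoint) together with the openness of $A_{p,q}$; the one delicate point in the interpolation is that the underlying measures $w^{p_i}\,dx$ and $w^{q_i}\,dx$ vary with the exponent, so one needs the off-diagonal (Stein–Weiss type) form of the interpolation theorem. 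All of this is classical, going back to Muckenhoupt–Wheeden and Sawyer.
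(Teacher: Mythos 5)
The paper does not actually prove Theorem \ref{T:maxIntBounds}: it is quoted as a classical result of Muckenhoupt--Wheeden and Sawyer, so there is no internal argument to compare against. Your sketch is, in essence, a reconstruction of the Muckenhoupt--Wheeden proof, and its skeleton is sound: the pointwise bound $M_{\alpha}^{\m{D}}f\le \fd\abs{f}$ for (iii)$\Rightarrow$(ii); testing on $f=\sigma\unit_{Q_0}$ with $\sigma=w^{-p'}$ for necessity (your exponent bookkeeping, using $\sigma^pw^p=\sigma$ and $q\alpha/n=q/p-1$, is correct); the weak-type $(p,q)$ bound from the disjoint maximal cubes together with $\sum_j a_j^{q/p}\le\big(\sum_j a_j\big)^{q/p}$; and the passage (ii)$\Rightarrow$(iii) via the fractional Coifman--Fefferman (good-$\lambda$) inequality for $w^q\in A_\infty$. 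Two remarks. First, the ``change of measure'' worry in the interpolation step dissolves if you conjugate: the operator $g\mapsto w\,M_{\alpha}^{\m{D}}(g/w)$ is sublinear and maps $L^{p_i}(dx)\to L^{q_i,\infty}(dx)$, so the ordinary off-diagonal Marcinkiewicz theorem (which needs $p_i\le q_i$ and $q_0\ne q_1$, both available here since $\alpha>0$) applies with Lebesgue measure on both sides; no Stein--Weiss machinery is required. Second, the two steps you invoke without proof --- the openness of $A_{p,q}$ in the direction $p_0<p$ while keeping $1/p_0-1/q_0=\alpha/n$ (which rests on a reverse H\"older inequality for $\sigma$), and the comparison $\norm{\fd f}_{L^q(w^q)}\lesssim\norm{M_{\alpha}^{\m{D}}f}_{L^q(w^q)}$ for $w^q\in A_\infty$ --- are exactly where the real work of Muckenhoupt--Wheeden lives, so as written your argument is an accurate road map rather than a complete proof; that is a fair match for how the paper itself treats the statement.

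One small caveat on the statement itself: testing the dyadic operators only yields the $A_{p,q}$ condition over cubes of the fixed grid $\m{D}$, not over all cubes, so (ii)$\Rightarrow$(i) as literally stated needs either a supremum over translated grids or the harmless reinterpretation of (i) as the dyadic $A_{p,q}$ condition. The direction the paper actually uses is (i)$\Rightarrow$(ii),(iii) with constants uniform in the grid, which your argument does deliver.
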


We now make two observations about $A_{p,q}$ weights which will be particularly useful to us. First, we note that:
	\begin{equation} \label{E:Mucks}
	\text{If } w \in A_{p,q} \text{, then: } w^p \in A_p,\:\: w^{-p'} \in A_{p'},\:\: w^q \in A_q\text{, and }\: w^{-q'} \in A_{q'},
	\end{equation}
where all weights above have Muckenhoupt characteristics bounded by powers of $[w]_{A_{p,q}}$.	
To see that $w^p \in A_p$, first notice $w \in A_{p,q}$ if and only if $w^q \in A_{q_0}$, with $[w^q]_{A_{q_0}} = [w]_{A_{p,q}}$, where
	$$q_0 \defeq 1 + q/p'= q(1 - \alpha/n).$$
Since the $A_p$ classes are increasing and $q_0 < q$, we have that $w^q \in A_q$. In turn, this gives that $w^{-q'} = (w^q)^{1-q'} \in A_{q'}$.  The other two statements in \eqref{E:Mucks}
follow in a similar fashion from the fact that $w \in A_{p,q}$ if and only if $w^{-1} \in A_{q', p'}$.
	
Second, suppose that $\mu, \lb \in A_{p,q}$ and let $\nu \defeq \mu\lb^{-1}$. Since $\mu^p, \lb^p \in A_p$, H\"older's inequality implies $\nu \in A_2$ (with $[\nu]_{A_2}^p \leq [\mu^p]_{A_p}[\lambda^p]_{A_p}$), a fact which will be used in proving the upper bound. Moreover, we claim that for any cube $Q$:
	\begin{equation} \label{E:wtEst}
	\mu^p(Q)^{1/p} \lb^{-q'}(Q)^{1/q'} \lesssim \nu(Q) |Q|^{\alpha/n},
	\end{equation}
a fact which will be useful in proving the lower bound. To see this, note first that 
	\begin{equation*}
	\La \mu^p\Ra_Q^{1/p} \La \mu^{-p'}\Ra_Q^{1/p'} \lesssim 1 \:\:\:\text{ and }\:\:\:
	\La\lb^{-q'}\Ra_Q^{1/q'} \La \lb^q\Ra_Q^{1/q} \lesssim 1,
	\end{equation*}
which simply come from $\mu^p \in A_p$ and $\lb^q \in A_q$. Since $p' > q'$, H\"older implies
	\begin{align*}\left( \frac{1}{|Q|} \int_Q \mu^{-q'}\,dx\right)^{1/q'}  &\leq \left( \frac{1}{|Q|} \left(\int_Q \mu^{-p'}\,dx\right)^{q'/p'} \left(\int_Q\,dx\right)^{1 - q'/p'}\right)^{1/q'}\\
	 &= \left( \frac{1}{|Q|} \int_Q \mu^{-p'}\,dx\right)^{1/p'},
	\end{align*}
and hence $\La \mu^{-q'}\Ra_Q^{1/q'} \leq \La \mu^{-p'}\Ra_Q^{1/p'}.$
Combining these estimates gives:
	$$ \La \mu^p\Ra_Q^{1/p} \La \lb^{-q'}\Ra_Q^{1/q'} \lesssim
		\frac{1}{\La \mu^{-p'}\Ra_Q^{1/p'}} \frac{1}{\La \lb^q\Ra_Q^{1/q}} \lesssim
		\frac{1}{ \La \mu^{-q'}\Ra_Q^{1/q'} \La \lb^q\Ra_Q^{1/q} } \leq \frac{1}{\La\nu^{-1}\Ra_Q} \leq \La \nu\Ra_Q.$$
The last two inequalities are more application of H{\"o}lder's inequality and the fact that $\nu^{-1} = \mu^{-1}\lb$.  This proves \eqref{E:wtEst}.


\section{Averaging Over Dyadic Fractional Integral Operators}
In this section, we show that $I_\alpha$ can be recovered
from \eqref{D:dfo}
by averaging over dyadic lattices. The proof here is modified
(and abridged) 
from the proof in \cite{PetTreVol2002}, but it is possible 
to modify any of the proofs in, for example, 
\cites{Pet2000,Hyt2008,Lac2009}. For the sake of clarity, we
only give the proof for the one--dimensional case.

Given an interval $[a,b)$ (it is not too important that the 
interval be closed on the left and open on the right) of
length $r$, we can create a dyadic lattice, $\m{D}_{a,r}$
in a standard way. In particular, $\m{D}_{a,r}$ is the
dyadic lattice on $\mathbb{R}$ with intervals of length
$r2^{-k}$, $k\in\mathbb{Z}$, and the point $a$ is not in
the interior of any of the intervals in $\m{D}_{a,r}$.
For example, $\m{D}_{0,1}$ is the standard dyadic lattice
on $\mathbb{R}$. For a given lattice $\m{D}_{a,r}$, we
let $\m{D}_{a,r}^{k}$ denote the intervals in 
$\m{D}_{a,r}$ with length $r2^{-k}$. In this section we 
slightly abuse notation and let $h_{I}^{1}=\abs{I}^{-1/2}\unit_{I}$.

Define:
\begin{align*}
\mathbb{P}_{(a,r)}^{0}f(x)
:=\sum_{I\in\m{D}_{a,r}^{0}}\abs{I}^{\alpha}\ip{f}{\nch{I}}\nch{I}(x).
\end{align*}
With $r$ and $x$ fixed, we can parameterize the dyadic
grids by the set $(-r,0]$ and we can give this set 
the probability measure $da/r$. 
For a fixed $x\in\mathbb{R}$, 
we want to compute:
\begin{align*}
\mathbb{E}(\mathbb{P}_{(a,r)}^{0}f(x))
=\int_{-r}^{0}\mathbb{P}_{(a,r)}^{0}f(x)\frac{da}{r}.
\end{align*}
Let $\tau_{t}f(x):=f(x+t)$ be the translation
operator and note that $\mathbb{P}_{a-t}\tau_t
=\tau_t\mathbb{P}_a$. From this it easily follows
that $\mathbb{E}\mathbb{P}_{(a,r)}^{0}\tau_t
=\tau_t\mathbb{P}_{(a,r)}^{0}$. That is, 
$\mathbb{E}\mathbb{P}_{(a,r)}^{0}$ is given
by convolution. Let:
\begin{align*}
\mathbb{E}\mathbb{P}_{(a,r)}^{0}f(x)=F_{0,r}\ast f(x).
\end{align*}
We want to compute $F_{0,r}$. First, note that 
$\mathbb{P}_{a,r}^{0}$ is convolution with the function
$\frac{r^{\alpha}}{r}\unit_{[-r/2,r/2]}$. Therefore, we have:
\begin{align*}
F_{0,r}\ast f(x)
&=\mathbb{E}\mathbb{P}_{(a,r)}^{0}f(x)
\\&=\mathbb{E}\mathbb{P}_{(a/2,r)}^{0}f(x)
\int_{x-r/2}^{x+r/2}\int_{\mathbb{R}}
  f(s)\frac{r^\alpha}{r}\unit_{-r/2,r/2}(t-s)ds\frac{dt}{r}.
\end{align*}
Using Fubini, we see that:
\begin{align*}
F_{0,r}(x)
=\int_{x-r/2}^{x+r/2}\frac{r^\alpha}{r}\unit_{[-r/2,r/2]}(t)\frac{dt}{r}
=\frac{r^\alpha}{r}\unit_{[-r/2,r/2]}(x)\left(1-\abs{\frac{x}{r}}\right)
=\frac{r^\alpha}{r}F_{0,1}(x/r).
\end{align*}
Now, fix an $r\in [1,2)$ and define:
\begin{align*}
F_r=\sum_{n\in\mathbb{Z}}F_{0,2^{n}r}.
\end{align*}
The grids $\m{D}_{a,r}^{k},k\in\mathbb{Z}$ 
can be unioned to form a dyadic lattice (here $a$ is fixed).
Call $r$ the calibre of the dyadic lattice. Convolution 
with $F_r$ is averaging over all the dyadic lattices
$\m{D}_{a,r}$ with fixed calibre $r$. That is:
\begin{align*}
F_r\ast f = \mathbb{E}\mathbb{P}_{\m{D}_{a,r}}f.
\end{align*}
Finally, we need to average over $r\in[1,2)$. Set
$F(x):=\int_{1}^{2}F_r(x)\frac{dr}{r}$. Now, 
we want to compute $F(x)$. There holds:
\begin{align*}
F(x)
&=\int_{1}^{2}F_r(x)\frac{dr}{r}
\\&=\int_{1}^{2}\sum_{n\in\mathbb{Z}}F_{0,2^{n}r}(x)\frac{dr}{r}
\\&=\int_{0}^{\infty}F_{0,\rho}(x)\frac{d\rho}{\rho}
\\&=\int_{0}^{\infty}F_{0,1}(\frac{x}{\rho})\frac{\rho^\alpha}{\rho^2}d\rho
\\&=\int_{0}^{\infty}\unit_{-1/2,1/2}(\frac{x}{\rho})
  (1-\abs{\frac{x}{\rho}})
  \frac{\rho^\alpha}{\rho^2}dr.
\end{align*}
Now, if $x>0$, making the change of variable $t=x/\rho$, we see:
\begin{align*}
F(x)=\frac{x^\alpha}{x}\int_{0}^{\infty}F_{0,1}(y)\frac{dy}{y^\alpha}
=c_\alpha\frac{1}{x^{1-\alpha}}.
\end{align*}
Doing a similar computation for when $x<0$, we see that
$F(x)=c_\alpha \frac{1}{\abs{x}^{1-\alpha}}$.


\section{Upper Bound}
The decomposition in Section 3 
means that the upper bound in Theorem \ref{T:mainThm} 
follows from the following, where the implied 
constants are independent of the dyadic lattice:
\begin{lm} \label{T:mainLem}
Suppose that $\alpha/n+1/q=1/p$ and $\mu, \lb \in A_{p,q}$. Let $\nu \defeq \mu\lb^{-1}$. Then:
\begin{align*}
\norm{[b,\fd]:L^p(\mu^p)\to L^q(\lb^q)}
\lesssim
\norm{b}_{\bmo(\nu)}.
\end{align*}
\end{lm}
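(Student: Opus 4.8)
The plan is to decompose the commutator $[b, \fd]$ using the Haar expansion of the multiplication operator, exploiting that $\fd$ is a positive dyadic averaging operator. Writing $b = \sum_{Q \in \m{D}, \ep \neq 1} \widehat{b}(Q, \ep) h_Q^{\ep}$ and expanding $[b, \fd]f = b \fd f - \fd(bf)$, I expect the paraproduct decomposition to organize the commutator into a sum of several operators: terms where $b$ and $f$ interact at widely separated scales, and terms where they interact at comparable scales. The key structural feature is that, unlike a Calder\'on--Zygmund operator, $\fd$ has the explicit form in \eqref{D:dfo} with coefficients $\abs{Q}^{\alpha/n} \La f \Ra_Q$, so the commutator can be written fairly directly in terms of martingale differences and averages. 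The goal is to reduce each resulting piece to a form where Lemma \ref{l:wA2} applies.

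The central idea is to test against an arbitrary $g \in L^{q'}(\lb^{-q'})$ and bound $\abs{\La [b, \fd] f, g \Ra}$. First I would dualize, using that $\nu = \mu \lb^{-1} \in A_2$ (established in the discussion surrounding \eqref{E:Mucks}), so that Lemma \ref{l:wA2} gives control of the relevant pairings by $\norm{b}_{\BMO^2_{\m{D}}(\nu)} \norm{S_{\m{D}} \Phi}_{L^1(\nu)}$ for the appropriate auxiliary function $\Phi$ built from $f$ and $g$. Then I would apply H\"older's inequality with the splitting $\nu = \mu \lb^{-1}$ to break $\norm{S_{\m{D}} \Phi}_{L^1(\nu)}$ into a product of an $L^p(\mu^p)$-type factor and an $L^{q'}(\lb^{-q'})$-type factor. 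The weighted Littlewood--Paley theorem (Theorem \ref{T:squareFunctBounds}), together with the $A_p$ and $A_{q'}$ membership of $\mu^p$ and $\lb^{-q'}$ from \eqref{E:Mucks}, would then convert the square function estimates back into norms of $f$ and $g$. The fractional scaling $\abs{Q}^{\alpha/n}$ must be absorbed correctly, and this is where the relation $1/p - 1/q = \alpha/n$ and the boundedness of $\fd$ and $M_\alpha^{\m{D}}$ from Theorem \ref{T:maxIntBounds} enter.

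The main obstacle I anticipate is handling the term in which $b$ and $f$ live at comparable scales — the ``diagonal'' paraproduct piece — since there the fractional weight $\abs{Q}^{\alpha/n}$ and the Haar coefficients of $b$ appear together in a way that does not immediately factor into a square function times a BMO norm. For this piece I expect to need a more careful estimate, possibly bounding the relevant coefficients by $\La \nu \Ra_Q \norm{b}_{\BMO(\nu)}$ using the definition \eqref{E:wBMONorm1} of the weighted BMO norm, and then recognizing the resulting sum as controlled by a fractional maximal operator or fractional integral applied to $f$, to which Theorem \ref{T:maxIntBounds} applies. The remaining ``off-diagonal'' terms, where $b$ lives at a strictly larger scale than $f$, should be more routine: there $\La b \Ra_Q$ type averages appear and can be estimated directly in terms of $\norm{b}_{\BMO(\nu)}$ and the $A_{p,q}$ weight estimates such as \eqref{E:wtEst}.

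Throughout, I would keep the dyadic grid $\m{D}$ fixed and track that every constant depends only on $n, \alpha, p$ and the $A_{p,q}$ characteristics of $\mu$ and $\lb$, so that the bound is uniform over dyadic lattices and the averaging argument of Section 3 yields the upper bound in Theorem \ref{T:mainThm}.
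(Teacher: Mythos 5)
Your plan follows essentially the same route as the paper's proof: Haar-expand $[b,\fd]$ into paraproduct-type pieces, write each bilinear form $\ip{Tf}{g}$ as $\ip{b}{\Phi}$, invoke Lemma \ref{l:wA2} with $\nu\in A_2$, and control $\norm{S_{\m{D}}\Phi}_{L^1(\nu)}$ via H\"older with the splitting $\nu=\mu\lb^{-1}$ together with Theorems \ref{T:squareFunctBounds} and \ref{T:maxIntBounds}. The ``diagonal'' pieces you single out need no special treatment in the paper: the same duality argument applies, with the pointwise dominations $(S_{\m{D}}\Phi)^2\lesssim (M_{\alpha}^{\m{D}}g)^2(S_{\m{D}}f)^2$ and $(S_{\m{D}}\Phi)^2\lesssim (I_{\alpha}^{\m{D}}|f|)^2(S_{\m{D}}g)^2$ handling them just as for the off-diagonal terms.
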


\begin{proof}
We show that $[b,I_{\alpha}^{\m{D}}]$ can be decomposed
as the sum of four operators which will be fairly easy to bound. First note that for $\eps\neq 1$, there holds:
\begin{align*}
I_{\alpha}^{\m{D}}h_Q^{\eps}
=\sum_{P\in\m{D}:P\subsetneq Q}\abs{P}^{\alpha/n}h_{Q}^{\eps}(P)
  \unit_{P}
=\bigg(\sum_{P\in\m{D}:P\subsetneq Q}\abs{P}^{\alpha/n}\unit_{P}
  \bigg)\he{Q}
=c_{\alpha}\abs{Q}^{\alpha/n}\he{Q}.
\end{align*}
Similarly, 
\begin{align*}
I_{\alpha}^{\m{D}}\unit_{Q}
=(1+c_{\alpha})\abs{Q}^{\alpha/n}\unit_{Q}
  +\abs{Q}\sum_{R\in\m{D}:Q\subsetneq R}
    \abs{R}^{\alpha/n}\frac{\unit_{R}}{\abs{R}}.
\end{align*}
Using these computations:
\begin{align*}
I^{\mcd}_{\alpha} (h_P^{\ep} h_Q^{\eta}) = \left\{
		\begin{array}{ll}
		c_{\alpha} |P \cap Q|^{\frac{\alpha}{n}} h_P^{\ep} h_Q^{\eta} & \text{, if } P \neq Q \text{ or if } P=Q \text{ and } \ep\neq \eta;\\
		(1 + c_{\alpha}) |Q|^{\frac{\alpha}{n}} \frac{\unit_Q}{|Q|} + \sum_{R \supsetneq Q} |R|^{\frac{\alpha}{n}} \frac{\unit_R}{|R|} & \text{, if } P = Q \text{ and } \ep=\eta.
		\end{array}
	\right.
\end{align*}
Thus:
\begin{align*}
	[h_P^{\ep}, I_{\alpha}^{\mcd}] h^{\eta}_Q = \left\{
		\begin{array}{ll}
		c_{\alpha} h_Q^{\eta}(P)h_P^{\ep} \left( |Q|^{\frac{\alpha}{n}} - |P|^{\frac{\alpha}{n}} \right) & \text{, if } P \subsetneq Q;\\
		-|Q|^{\frac{\alpha}{n}} \frac{\unit_Q}{|Q|} - \sum_{R \supsetneq Q} |R|^{\frac{\alpha}{n}} \frac{\unit_R}{|R|} & \text{, if } P = Q \text{ and } \ep=\eta; \\
		0 & \text{, if } Q \subsetneq P \text{, or if } Q = P \text{ and } \ep\neq\eta. 
		\end{array}
	\right.
\end{align*}
Expressing $b$ and $f$ in terms of their Haar coefficients, we obtain that
	$$
	[b, I^{\mcd}_{\alpha}] f = \sum_{P, Q \in \mcd} 
	\sum_{\ep,\eta\neq 1} \widehat{b}(P,\ep) \widehat{f}(Q,\eta) 
	[h_P^{\ep}, I_{\alpha}^{\mcd}] h^{\eta}_Q.
	$$
Using this, there holds
\begin{align}\label{E:Decomp}
[b, I^{\mcd}_{\alpha}]f = 
  c_{\alpha} T_1 f - c_{\alpha} \Pi_{b,\alpha}^{(0,1,0)}f 
  - \Pi_{b,\alpha}^{(0,0,1)}f - T_2f,
\end{align}
where:
\begin{align*}
\Pi_{b,\alpha}^{(0,1,0)} f &\defeq  \sum_{Q\in\mcd,\ep\neq 1} \widehat{b}(Q,\ep) \La f\Ra_Q |Q|^{\frac{\alpha}{n}} h_Q^{\ep};\\
\Pi_{b,\alpha}^{(0,0,1)} f & \defeq \sum_{Q\in\mcd,\ep\neq 1} \widehat{b}(Q,\ep) \widehat{f}(Q,\ep) |Q|^{\frac{\alpha}{n}} \frac{\unit_Q}{|Q|};\\
T_1 f & \defeq \sum_{P\in\mcd,\ep\neq 1} \widehat{b}(P,\ep) \left( \sum_{Q \supsetneq P, \eta\neq 1} \widehat{f}(Q,\eta) h_Q^{\eta}(P) |Q|^{\frac{\alpha}{n}} \right) h_P^{\ep};\\
T_2 f & \defeq \sum_{P\in\mcd,\ep\neq 1} \widehat{b}(P,\ep) \widehat{f}(P,\ep) \left( \sum_{Q \supsetneq P} |Q|^{\frac{\alpha}{n}} \frac{\unit_Q}{|Q|} \right).
\end{align*}

We will show that all of these operators are bounded 
$L^p(\mu^p) \rightarrow L^q(\lb^q)$. Below, all implied 
constants are allowed to depend on $n, \alpha, p, 
[\mu]_{A_{p,q}}$, and $[\lb]_{A_{p,q}}$. Also all inner products
below are taken with respect to $dx$ and therefore it
is enough to show:
\begin{align*}
\left \vert \ip{Tf}{g}\right\vert
\lesssim \norm{b}_{\bmo(\nu)}\norm{f}_{L^p(\mu^p)}
  \norm{g}_{L^{q'}(\lambda^{-q'})},
\end{align*}
for each of the four operators above (this is because the dual
of $L^q(\lambda^q)$ with respect to the unweighted inner product 
is $L^{q'}(\lambda^{-q'})$). The idea, which is
taken from \cites{HolLacWic2015a,HolLacWic2015b}, is to
write the bilinear form, $\ip{Tf}{g}$ as 
$\ip{b}{\Phi}$ and then show that $\norm{S_{\m{D}}\Phi}_{L^1(\nu)}$ 
is controlled by $\norm{f}_{L^p(\mu^p)} 
\norm{g}_{L^{q'}(\lambda^{-q'})}$; by the weighted $H^1-\bmo$ duality, 
this is enough to prove the claim. 

The estimates for the two paraproducts are almost identical, and 
we only give the proof for $\Pi_{b,\alpha}^{(0,1,0)}$. First
with
\begin{align*}
\Phi \defeq \sum_{Q\in\mcd,\ep\neq 1} 
  \avg{f}_Q |Q|^{\frac{\alpha}{n}} \widehat{g}(Q,\ep) h_Q^{\ep},
\end{align*}
there holds:
\begin{align*}
\ip{\Pi_{b,\alpha}^{(0,1,0)}f}{g}=\ip{b}{\Phi}.
\end{align*}
Then:
\begin{align*}
(S_{\mcd}\Phi)^2 = 
  \sum_{Q\in\mcd,\ep\neq 1} 
  |\La f\Ra_Q|^2 |Q|^{\frac{2\alpha}{n}} 
  |\widehat{g}(Q,\ep)|^2 \frac{\unit_Q}{|Q|} 
  \leq (M_{\alpha}f)^2 (S_{\mcd}g)^2.
\end{align*}
Therefore,
\begin{align*}
\|S_{\mcd}\Phi\|_{L^1(\nu)} 
  \leq \|M_{\alpha}f\|_{L^q(\mu^q)} \|S_{\mcd}g\|_{L^{q'}(\lb^{-q'})} 
  \lesssim \|f\|_{L^p(\mu^p)} \|g\|_{L^{q'}(\lb^{-q'})},
\end{align*}
where the last inequality follows from Theorem \ref{T:maxIntBounds} 
for the fractional maximal function, and from 
Theorem \ref{T:squareFunctBounds} and the fact 
that $\lb^{-q'} \in A_{q'}$ for the dyadic square function. 
The proof for $\Pi_{b, \alpha}^{(0,0,1)}$ is very similar, and
we omit the details. 

Now let us look at $T_1$. As above, we have $\ip{T_1 f}{g}=\ip{b}{\Phi}$, with 
\begin{align*}
\Phi:=
\sum_{P\in\mcd, \ep\neq 1} 
  \widehat{g}(P,\ep) \left( \sum_{Q \supsetneq P, \eta\neq 1} 
  \widehat{f}(Q,\eta) h_Q^{\eta}(P) |Q|^{\frac{\alpha}{n}} \right) 
  h_P^{\ep},
\end{align*}
Then:
\begin{align*}
(S_{\mcd} \Phi)^2 
& \leq \sum_{P\in\mcd,\ep\neq 1} |\widehat{g}(P,\ep)|^2 
  \left( \sum_{Q\supsetneq P, \eta\neq 1} \La |f|\Ra_Q 
  |Q|^{\frac{\alpha}{n}} \right)^2 \frac{\unit_P}{|P|}
\\&\leq (I_{\alpha}^{\mcd}|f|)^2 (S_{\mcd}g)^2.
\end{align*}
From Theorem \ref{T:maxIntBounds} and Theorem \ref{T:squareFunctBounds}, 
it follows that
\begin{align*}
\|S_{\mcd}\Phi\|_{L^1(\nu)} 
\leq \|I_{\alpha}^{\mcd} |f| \|_{L^q(\mu^q)} 
  \|S_{\mcd}g\|_{L^{q'}(\lb^{-q'})} 
\lesssim \|f\|_{L^p(\mu^p)} \|g\|_{L^{q'}(\lb^{-q'})}.
\end{align*}
The estimates for $T_2$ are similar and we omit the details.
\end{proof}


\section{Lower Bound}
In this section, we prove the lower bound in Theorem
\ref{T:mainThm}, which follows immediately from the Lemma below. In particular, 
we will show the following:
\begin{lm}
For all cubes, $Q$:
\begin{align*}
\frac{1}{\nu(Q)}\int_{Q}\abs{b(x)-\avg{b}_{Q}}dx
\lesssim \norm{[b,I_\alpha]:L^p(\mu^p)\to L^q(\lambda^q)}.
\end{align*}
\end{lm}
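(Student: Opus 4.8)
The plan is to run a Coifman--Rochberg--Weiss type testing argument: fix a cube $Q$, pair the commutator against indicators on $Q$ and on a well-separated neighbour $\tilde Q$, and then convert the resulting bilinear estimate into the stated inequality using the weight comparison \eqref{E:wtEst}. Throughout I may assume $b$ is real-valued, and since $[b,I_\alpha]=[b-c,I_\alpha]$ for any constant $c$, I am free to normalize $b$ by subtracting a convenient constant. I will also use the elementary fact that $\int_Q|b-\langle b\rangle_Q|\,dx\le 2\int_Q|b-c|\,dx$ for every constant $c$, which permits comparison against the average of $b$ over a nearby cube.

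Fix $Q$ and choose a translate $\tilde Q$ with $|\tilde Q|=|Q|$ and $\operatorname{dist}(Q,\tilde Q)$ a fixed multiple of $\ell(Q)=|Q|^{1/n}$, so that $|x-y|\approx \ell(Q)$ for all $x\in Q$, $y\in\tilde Q$; consequently $\tfrac{1}{|x-y|^{n-\alpha}}$ is comparable to $|Q|^{(\alpha-n)/n}$ and, crucially, smooth in $x$ on $Q$. I take $f=\unit_{\tilde Q}$ and let $g$ be a bounded function supported on $Q$, of mean zero, equal to $\operatorname{sgn}(b-\langle b\rangle_Q)$ up to an additive constant chosen to enforce $\int_Q g=0$; then $\|g\|_\infty\le 2$ and $\int_Q g\,b=\int_Q|b-\langle b\rangle_Q|\,dx$. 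Writing $\Phi:=I_\alpha\unit_{\tilde Q}$ and $\Psi:=I_\alpha(b\unit_{\tilde Q})$, the bilinear form is $\langle [b,I_\alpha]f,g\rangle=\int_Q g\,(b\Phi-\Psi)\,dx$, and the point of taking $g$ of mean zero is that the contribution of the (nearly constant) functions $\Phi$ and $\Psi$ collapses: to leading order $\Phi\approx |Q|^{\alpha/n}$, so $\int_Q g\,b\Phi\approx |Q|^{\alpha/n}\int_Q|b-\langle b\rangle_Q|\,dx$, while $\int_Q g\,\Psi$ and the fluctuation of $\Phi$ contribute only lower-order errors.

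The upper estimate for the form is immediate from duality. Pairing with respect to $dx$,
\[
|\langle [b,I_\alpha]f,g\rangle|\le \norm{[b,I_\alpha]:L^p(\mu^p)\to L^q(\lb^q)}\,\|\unit_{\tilde Q}\|_{L^p(\mu^p)}\,\|g\|_{L^{q'}(\lb^{-q'})}\lesssim \norm{[b,I_\alpha]:L^p(\mu^p)\to L^q(\lb^q)}\,\mu^p(\tilde Q)^{1/p}\lb^{-q'}(Q)^{1/q'}.
\]
Since $\mu^p\in A_p$ is doubling and $\tilde Q$ sits at a fixed distance and scale from $Q$, we have $\mu^p(\tilde Q)\lesssim \mu^p(Q)$, and then \eqref{E:wtEst} gives $\mu^p(\tilde Q)^{1/p}\lb^{-q'}(Q)^{1/q'}\lesssim \nu(Q)|Q|^{\alpha/n}$. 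Comparing this with the lower bound $\langle [b,I_\alpha]f,g\rangle\gtrsim |Q|^{\alpha/n}\int_Q|b-\langle b\rangle_Q|\,dx$ and cancelling $|Q|^{\alpha/n}$ yields $\int_Q|b-\langle b\rangle_Q|\,dx\lesssim \norm{[b,I_\alpha]:L^p(\mu^p)\to L^q(\lb^q)}\,\nu(Q)$, which is the claim after dividing by $\nu(Q)$.

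The main obstacle is making the lower bound on the bilinear form rigorous, that is, controlling the error created by the kernel not being exactly constant on $Q\times\tilde Q$. The natural estimates for $\int_Q g\,\Psi$ and for the fluctuation $\int_Q g\,b(\Phi-|Q|^{\alpha/n})$ produce a remainder governed by the oscillation of $b$ over the auxiliary cube $\tilde Q$, which is exactly what must be shown to be genuinely lower order rather than comparable to the main term. Here the positivity and smoothness of the fractional kernel are decisive: the mean-zero choice of $g$ replaces $\Phi$ and $\Psi$ by their fluctuations over $Q$, each of which is bounded, via $|\nabla_x|x-y|^{-(n-\alpha)}|\lesssim |x-y|^{-(n-\alpha)-1}$, by the oscillation of $b$ times a negative power of the separation. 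Enlarging the separation by a fixed factor (absorbed into constants depending on the weight characteristics through the doubling of $\nu\in A_2$) then makes this remainder small relative to $|Q|^{\alpha/n}\int_Q|b-\langle b\rangle_Q|$. Verifying that this absorption truly closes, rather than merely reproducing an oscillatory quantity on both sides, is the delicate point, and it is precisely where \eqref{E:wtEst} and the $A_{p,q}$ structure carry the weight of the argument.
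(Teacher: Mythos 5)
Your endgame is sound and matches the paper's: pairing against an indicator of a separated congruent cube and a bounded sign-type function on $Q$, applying duality to get $\norm{[b,I_\alpha]}\,\mu^p(\tilde Q)^{1/p}\lambda^{-q'}(Q)^{1/q'}$, and then invoking doubling and \eqref{E:wtEst} to reach $\nu(Q)|Q|^{\alpha/n}$. The gap is in the front end, exactly at the point you yourself flag as ``delicate'': the perturbative replacement of the kernel by a constant leaves an error of the form
\begin{equation*}
|E|\;\lesssim\;\frac{1}{A}\,\frac{|Q|^{\alpha/n}}{|Q|}\int_Q\int_{\tilde Q}|b(x)-b(y)|\,dy\,dx\;\lesssim\;\frac{1}{A}\,|Q|^{\alpha/n}\Bigl(\textstyle\int_Q|b-\avg{b}_Q|\,dx+\int_{\tilde Q}|b-\avg{b}_{\tilde Q}|\,dx+|Q|\,|\avg{b}_Q-\avg{b}_{\tilde Q}|\Bigr),
\end{equation*}
and only the first of the three summands can be absorbed into the main term $|Q|^{\alpha/n}\int_Q|b-\avg{b}_Q|\,dx$ by taking the separation parameter $A$ large. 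The oscillation of $b$ over $\tilde Q$ and the drift of averages between $Q$ and $\tilde Q$ are \emph{not} dominated by any constant multiple of $\int_Q|b-\avg{b}_Q|\,dx$, so ``enlarging the separation by a fixed factor'' does not close the estimate. One could try a sup-and-absorb argument over all cubes (using $\nu(\tilde Q)\simeq\nu(Q)$), but that requires the a priori qualitative finiteness of $\sup_Q\nu(Q)^{-1}\int_Q|b-\avg{b}_Q|\,dx$ --- i.e.\ that $b\in\BMO(\nu)$ already --- which is not given and which you do not supply via any truncation or limiting procedure.

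The paper avoids this issue entirely by the Coifman--Rochberg--Weiss device (following Chaffee): since $|x-y|/(2\sqrt n\,\ell(P))$ lies in a fixed annulus $[1/8,1/2]$ for $x\in Q$, $y\in P_R$, the quantity $\bigl(|x-y|/(2\sqrt n\,\ell(P))\bigr)^{n-\alpha}$ coincides there with a smooth periodic function $K$, so multiplying and dividing by it is an \emph{exact} identity, not an approximation. Expanding $K$ in a Fourier series with summable coefficients reduces the oscillation $\int_Q|b-\avg{b}_{P_R}|\,dx$ to a summable family of pairings $\int h_k\,[b,I_\alpha]f_k\,dx$ with modulated indicators $f_k,h_k$, each of which is handled by your duality-plus-\eqref{E:wtEst} computation with no remainder to absorb. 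If you want to keep a perturbative scheme instead, you would need either the Fourier (or an equivalent median/sign-set) trick, or a genuine a priori bound making the absorption legitimate; as written, the proof does not close.
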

\begin{proof}
The proof here follows along the lines of the proof in 
\cite{Chaf2014}. 
We first make some reductions. As with unweighted $\bmo$, we can
replace the $\avg{b}_{Q}$ with any constant. Indeed, there holds:
\begin{align*}
\frac{1}{\nu(Q)}\int_{Q}\abs{b(x)-\avg{b}_{Q}}dx
&\leq \frac{1}{\nu(Q)}\int_{Q}\abs{b(x)-C_Q}dx
  +\frac{\abs{Q}}{\nu(Q)}\abs{C_Q-\avg{b}_{Q}}
\\&\leq \frac{2}{\nu(Q)}\int_{Q}\abs{b(x)-C_Q}dx.
\end{align*}
Second, let $P$ be the cube with $l(P)=4l(Q)$, where $l(Q)$ is 
the side length of $Q$, and
with the same ``bottom left corner'' as $Q$. By the doubling property of $A_\infty$ weights, there holds
$\nu(P)\simeq\nu(Q)$, and therefore it is enough to prove:
\begin{align*}
\frac{1}{\nu(P)}\int_{Q}\abs{b(x)-C_Q}dx
\lesssim \norm{[b,I_\alpha]:L^p(\mu^p)\to L^q(\lambda^q)}.
\end{align*}
Finally, let $P_R$ be the ``upper right half'' of $P$. Below, we will
use $C_Q=\avg{b}_{P_R}$. 

Now, for $x\in Q$ and $y\in P_R$ there holds:
\begin{align*}
\frac{\abs{x-y}}{2\sqrt{n}\abs{P}^{1/n}}
\geq \frac{\sqrt{n}\abs{Q}^{1/n}}{2\sqrt{n}\abs{P}^{1/n}}
=\frac{1}{8}
\hspace{.2in}
\textnormal{ and }
\hspace{.2in}
\frac{\abs{x-y}}{2\sqrt{n}\abs{P}^{1/n}}
\leq \frac{\sqrt{n}\abs{P}^{1/n}}{2\sqrt{n}\abs{P}^{1/n}}
\leq \frac{1}{2}.
\end{align*}
The point is that there is a function, $K(x)$, that is smooth
on $[-1,1]^{n}$, has a smooth periodic extension to $\mathbb{R}^n$, and is
equal to $\abs{x}^{n-\alpha}$ for $1/8\leq\abs{x}\leq 1/2$.
Therefore, for $x\in Q$ and $y\in P_R$ there holds:
\begin{align*}
\frac{\abs{x-y}}{2\sqrt{n}\abs{J}}
=K\left(\frac{x-y}{2\sqrt{n}\abs{J}}\right).
\end{align*}
Important for us is the fact that $K$ has a Fourier 
expansion with summable coefficients. 

We are now ready to prove the main estimate. First, let
$\sigma(x)=\textnormal{sgn}(b(x)-\avg{b}_{P_R})$. Then:
\begin{align*}
\int_{Q}\abs{b(x)-\avg{b}_{P_R}}dx
&=\frac{1}{\abs{P_R}}\int_{\mathbb{R}}\int_{\mathbb{R}}
  (b(x)-b(y))\sigma(x)\unit_{Q}(x)\unit_{P_R}(y)dydx
\\&=\frac{1}{\abs{P_R}}\int_{\mathbb{R}}\int_{\mathbb{R}}
  \frac{b(x)-b(y)}{\left(\frac{\abs{x-y}}{2\sqrt{n}\abs{P}}\right)^{n-\alpha}}
  \left(\frac{\abs{x-y}}{2\sqrt{n}\abs{P}}\right)^{n-\alpha}
  \sigma(x)\unit_{Q}(x)\unit_{P_R}(y)dydx
\\&\simeq \abs{P}^{-\alpha/n}\int_{\mathbb{R}}\int_{\mathbb{R}}
  \frac{b(x)-b(y)}{\abs{x-y}^{n-\alpha}}
  K\left(\frac{x-y}{2\sqrt{n}\abs{P}}\right)
  \sigma(x)\unit_{Q}(x)\unit_{P_R}(y)dydx.
\end{align*}
Observe that the integral above is positive, so the ``$\simeq$''
is not a problem. Expanding $K$ in its Fourier series:
\begin{align*}
K\left(\frac{x-y}{2\sqrt{n}\abs{P}}\right)
=\sum_{k}a_ke^{ikx/2\sqrt{n}\abs{P}}e^{-iky/2\sqrt{n}\abs{P}},
\end{align*}
and inserting this into the integral, we continue:
\begin{align*}
\abs{P}^{-\alpha}\sum_{k}a_k\int_{Q}\int_{P_R}
  \frac{b(x)-b(y)}{\abs{x-y}^{n-\alpha}}
  \sigma(x)e^{ikx/c\abs{P}}e^{-iky/c\abs{P}}dydx
=\abs{P}^{-\alpha}\sum_{k}a_k\int_{\mathbb{R}}
  h_k(x)[b,I_\alpha]f_k(x)dx,
\end{align*}
where $h_k(x)=\sigma(x)e^{ikx/c\abs{P}}\unit_{P}(x)$ and
$f_k(y)=e^{-iky/c\abs{P}}\unit_{P_R}(y)$. We control the integral
by:
\begin{align*}
\int_{\mathbb{R}}h_k(x)[b,I_\alpha]f_k(x)dx
&\leq \norm{[b,I_\alpha]:L^p(\mu^p)\to L^q(\lambda^q)}
  \norm{f_k}_{L^p(\mu^p)}\norm{h_k}_{L^{q'}(\lambda^{-q'})}
\\&=\norm{[b,I_\alpha]:L^p(\mu^p)\to L^q(\lambda^q)}
  \mu^{p}(P_R)^{1/p}\lambda^{-q'}(P)^{1/q'}
\\&=\norm{[b,I_\alpha]:L^p(\mu^p)\to L^q(\lambda^q)}
  \mu^{p}(P)^{1/p}\lambda^{-q'}(P)^{1/q'}.
\end{align*}
By \eqref{E:wtEst}, this is dominated by:
\begin{align*}
\norm{[b,I_\alpha]:L^p(\mu^p)\to L^q(\lambda^q)}
  \abs{P}^{\alpha}\nu(P).
\end{align*}
This completes the proof.
\end{proof}

\noindent \textbf{Acknowledgements.} All three authors would like to thank 
Michael Lacey and Brett Wick for suggesting the problem, and for many useful 
conversations on this topic.  The second author completed most of his 
portion of the work while still a student at Georgia Tech.


\begin{bibdiv}
  
\begin{biblist}
\bib{Blo1985}{article}{
   author={Bloom, Steven},
   title={A commutator theorem and weighted BMO},
   journal={Trans. Amer. Math. Soc.},
   volume={292},
   date={1985},
   number={1},
   pages={103--122}
}

\bib{Chaf2014}{article}{
    author={Chaffee, Lucas},
    title={Characterizations of BMO through 
    commutators of bilinear singular integral
    operators},
    date={2014}
    eprint={http://arxiv.org/pdf/1410.4587v3.pdf}
}

\bib{Chan1982}{article}{
   author={Chanillo, S.},
   title={A note on commutators},
   journal={Indiana Univ. Math. J.},
   volume={31},
   date={1982},
   number={1},
   pages={7--16},
   issn={0022-2518}
}

\bib{CoifFeff}{article}{
	author={Coifman, },
	author={Fefferman, },
	title={Weighted norm inequalities for maximal functions and singular integrals},
	date={1974},
}

\bib{Cru2015}{article}{
    author={Cruz-Uribe, David},
    title={Two weight norm inequalities for 
    fractional integral operators and commutators},
    date={2015},
    eprint={http://arxiv.org/abs/1412.4157}
}

\bib{CruMoe2013a}{article}{
   author={Cruz-Uribe, David},
   author={Moen, Kabe},
   title={A fractional Muckenhoupt-Wheeden theorem and its consequences},
   journal={Integral Equations Operator Theory},
   volume={76},
   date={2013},
   number={3},
   pages={421--446}
}

\bib{CruMoe12013b}{article}{
   author={Cruz-Uribe, David},
   author={Moen, Kabe},
   title={One and two weight norm inequalities for Riesz potentials},
   journal={Illinois J. Math.},
   volume={57},
   date={2013},
   number={1},
   pages={295--323}
}

\bib{DraEtAl2005}{article}{
   author={Dragi{\v{c}}evi{\'c}, Oliver},
   author={Grafakos, Loukas},
   author={Pereyra, Mar{\'{\i}}a Cristina},
   author={Petermichl, Stefanie},
   title={Extrapolation and sharp norm estimates for classical operators on
   weighted Lebesgue spaces},
   journal={Publ. Mat.},
   volume={49},
   date={2005},
   number={1},
   pages={73--91}
}

\bib{GarMar2001}{article}{
   author={Garc{\'{\i}}a-Cuerva, J.},
   author={Martell, J. M.},
   title={Wavelet characterization of weighted spaces},
   journal={J. Geom. Anal.},
   volume={11},
   date={2001},
   number={2},
   pages={241--264}
}

\bib{GelShi1964}{book}{
   author={Gel{\cprime}fand, I. M.},
   author={Shilov, G. E.},
   title={Generalized functions. Vol. 1},
   note={Properties and operations;
   Translated from the Russian by Eugene Saletan},
   publisher={Academic Press [Harcourt Brace Jovanovich, Publishers], New
   York-London},
   date={1964 [1977]},
   pages={xviii+423}
}

\bib{Gra2014}{book}{
   author={Grafakos, Loukas},
   title={Classical Fourier analysis},
   series={Graduate Texts in Mathematics},
   volume={249},
   edition={3},
   publisher={Springer, New York},
   date={2014},
   pages={xviii+638},
   isbn={978-1-4939-1193-6}
}

\bib{HolLacWic2015a}{article}{
    author={Holmes, Irina},
    author={Lacey, Michael T.},
    author={Wick, Brett D.},
    title={Bloom's Inequality: Commutators in a Two-Weight Setting},
    date={2015},
    eprint={http://arxiv.org/abs/1505.07947}
}

\bib{HolLacWic2015b}{article}{
	author={Holmes, I.},
	author={Lacey, Michael T.},
	author={Wick, Brett D.},
  	title={Commutators in the Two-Weight Setting},
  	journal={arXiv preprint arXiv:1506.05747},
  	year={2015},
}

\bib{Hyt2008}{article}{
   author={Hyt{\"o}nen, Tuomas},
   title={On Petermichl's dyadic shift and the Hilbert transform},
   language={English, with English and French summaries},
   journal={C. R. Math. Acad. Sci. Paris},
   volume={346},
   date={2008},
   number={21-22},
   pages={1133--1136}
}

\bib{Hyt2012}{article}{
   author={Hyt{\"o}nen, Tuomas P.},
   title={The sharp weighted bound for general Calder\'on-Zygmund operators},
   journal={Ann. of Math. (2)},
   volume={175},
   date={2012},
   number={3},
   pages={1473--1506}
}

\bib{Isr2015}{article}{
    author={Isralowitz, Joshua},
    title={A Matrix Weighted $T1$ Theorem for Matrix
    Kernelled CZOs and a Matrix Weighted John--Nirinberg
    Theorem},
    date={2015},
    eprint={http://arxiv.org/abs/1508.02474}
}

\bib{Lac2007}{article}{
   author={Lacey, Michael T.},
   title={Commutators with Reisz potentials in one and several parameters},
   journal={Hokkaido Math. J.},
   volume={36},
   date={2007},
   number={1},
   pages={175--191}
}

\bib{Lac2009}{article}{
   author={Lacey, Michael},
   title={Haar shifts, commutators, and Hankel operators},
   journal={Rev. Un. Mat. Argentina},
   volume={50},
   date={2009},
   number={2},
   pages={1--13},
   issn={0041-6932},
   review={\MR{2656521 (2011f:47045)}},
}

\bib{LeeLinLin2009}{article}{
   author={Lee, Ming-Yi},
   author={Lin, Chin-Cheng},
   author={Lin, Ying-Chieh},
   title={A wavelet characterization for the dual of weighted Hardy spaces},
   journal={Proc. Amer. Math. Soc.},
   volume={137},
   date={2009},
   number={12},
   pages={4219--4225}
}

\bib{MuckWheedenFrac}{article}{
	author={Muckenhoupt, B.},
	author={Wheeden, R. L.},
	title={Weighted norm inequalities for fractional integrals},
	journal={Trans. Amer. Math. Soc.},
	volume={192},
	pages={261--274},
	date={1974},
}

\bib{MucWhe1975}{article}{
  author={Muckenhoupt, B.},
  author={Wheeden, R. L.},
  title={Weighted bounded mean oscillation and the Hilbert transform},
  journal={Studia Math.},
  volume={54},
  date={1975/76},
  number={3},
  pages={221--237}
}

\bib{OldSpa1987}{article}{
   author={Oldham, K. B.},
   author={Spanier, J.},
   title={Fractional calculus and its applications},
   language={English, with Romanian summary},
   journal={Bul. Inst. Politehn. Ia\c si Sec\c t. I},
   volume={24(28)},
   date={1978},
   number={3-4},
   pages={29--34},
   review={\MR{552175 (80k:26007)}},
}

\bib{Pet2000}{article}{
   author={Petermichl, Stefanie},
   title={Dyadic shifts and a logarithmic estimate for Hankel operators with
   matrix symbol},
   language={English, with English and French summaries},
   journal={C. R. Acad. Sci. Paris S\'er. I Math.},
   volume={330},
   date={2000},
   number={6},
   pages={455--460},
   issn={0764-4442},
   review={\MR{1756958 (2000m:42016)}},
   doi={10.1016/S0764-4442(00)00162-2},
}

\bib{PetTreVol2002}{article}{
   author={Petermichl, S.},
   author={Treil, S.},
   author={Volberg, A.},
   title={Why the Riesz transforms are averages of the dyadic shifts?},
   booktitle={Proceedings of the 6th International Conference on Harmonic
   Analysis and Partial Differential Equations (El Escorial, 2000)},
   journal={Publ. Mat.},
   date={2002}
}

\bib{RahSpe2015}{article}{
    author={Rahm, Robert},
    author={Spencer, Scott},
    title={Some Entropy Bump Conditions for Fractional Maximal and Integral Operators},
    date={2015},
    eprint={http://arxiv.org/abs/1504.05906v2}
}

\bib{Roch1993}{article}{
   author={Rochberg, Richard},
   title={NWO sequences, weighted potential operators, and Schr\"odinger
   eigenvalues},
   journal={Duke Math. J.},
   volume={72},
   date={1993},
   number={1},
   pages={187--215}
}

\bib{Saw1982}{article}{
   author={Sawyer, Eric T.},
   title={A characterization of a two-weight norm inequality for maximal
   operators},
   journal={Studia Math.},
   volume={75},
   date={1982},
   number={1},
   pages={1--11}
}

\bib{Saw1988}{article}{
   author={Sawyer, Eric T.},
   title={A characterization of two weight norm inequalities for fractional
   and Poisson integrals},
   journal={Trans. Amer. Math. Soc.},
   volume={308},
   date={1988},
   number={2},
   pages={533--545}
}

\bib{Wu1992}{article}{
   author={Wu, Si Jue},
   title={A wavelet characterization for weighted Hardy spaces},
   journal={Rev. Mat. Iberoamericana},
   volume={8},
   date={1992},
   number={3},
   pages={329--349}
}

\end{biblist}
\end{bibdiv}

\end{document}